\newcommand{\floor}[1]{\lfloor #1 \rfloor}
\numberwithin{equation}{section}
\newtheorem{theorem}{Theorem}[section]
\newtheorem{lemma}[theorem]{Lemma}
\newtheorem{thmx}{Theorem}
\theoremstyle{remark}
\newtheorem{remark}[theorem]{Remark}
\theoremstyle{definition}
\newcommand{\Z}{\mathbb Z}
\newcommand{\R}{\mathbb R}
\newcommand{\N}{\mathbb N}
\newcommand{\1}{\mathbbm 1}
\newcommand{\eps}{\varepsilon}
\newcommand{\dd}{\text{\upshape{d}}}
\renewcommand{\P}{\mathbb P}
\newcommand{\E}{\mathbb E}
\newcommand{\p}{\mathfrak{p}}
\renewcommand{\a}{\mathfrak{a}}
\newcommand{\kkappa}{{\boldsymbol \kappa}}
\newcommand{\ddelta}{{\boldsymbol \delta}}
\title{On large deviation rate functions for a continuous-time directed polymer in weak disorder}
\author{Ryoki Fukushima}
\address{Institute of Mathematics, University of Tsukuba, 1-1-1 Tennodai, Tsukuba, Ibaraki 305--8571, Japan}
\email{ryoki@math.tsukuba.ac.jp}
\author{Stefan Junk}
\address{Research Institute for Mathematical Sciences, Kyoto University, Kyoto 606--8502, Japan}
\curraddr{Institute of Mathematics, University of Tsukuba, 1-1-1 Tennodai, Tsukuba, Ibaraki 305--8571, Japan}
\email{sjunk@math.tsukuba.ac.jp}
\begin{document}

\begin{abstract}
We consider the endpoint large deviation for a continuous-time directed polymer in a L\'evy-type random environment. When the space dimension is at least three, it is known that the so-called weak disorder phase exists, where the quenched and annealed free energies coincide. We prove that the rate function agrees with that of the underlying random walk near the origin in the whole interior of the weak disorder phase. 
\end{abstract}

\maketitle

\section{Introduction}
\label{sec:intro}
In this article, we study a continuous-time directed polymer model in a L\'evy-type random environment. This type of model is known to exhibit a phase transition when space dimension is greater than or equal to three. More precisely, the polymer behaves like simple random walk when the disorder is weak, whereas it tends to localize when the disorder is strong; see a recent survey~\cite{comets_st_flour} for more detail. While most of the research has been devoted to typical behaviors of the polymer, it is also shown in~\cite[Theorem~6.1]{RAS14} and~\cite[Exercise 9.1]{comets_st_flour} that \emph{deeply} inside the weak disorder phase, even the rate function of the large deviation principle for the polymer endpoint coincides with that for the simple random walk near the origin. The aim of this paper is to present a simple argument that extends this result to the \emph{whole} interior of weak disorder phase for continuous-time directed polymer model in a L\'evy-type random environment. 

Let $(\omega=(\omega_x)_{x\in\Z^d},\P)$ be an independent and identically distributed (i.i.d.)~collection of real-valued L\'evy processes with characteristic triple $(0,\sigma^2,\rho)$. We assume that the L\'evy measure $\rho$ has finite mass, is supported on $[-1,\infty)$ and satisfies
\begin{align*}
\int_{[-1,\infty)}r\rho(\dd r)<\infty.
\end{align*}
The assumption that $\rho$ has finite mass is not essential, but leads to a simpler presentation. In particular, it implies that the L\'evy processes are of pure jump type and we can therefore write
\begin{align}
\omega_x(t)=\sigma^2B_x(t)+\int_{[0,t]\times[-1,\infty)}r N_x(\dd s\dd r),
\label{eq:noise}
\end{align}
where $(B_x)_{x\in\Z^d}$ is an i.i.d.~collection of Brownian motions and $(N_x)_{x\in\Z^d}$ is an i.i.d.~collection of Poisson point processes on $\R_+\times[-1,\infty)$, which is independent of $(B_x)_{x\in\Z^d}$, with intensity measure $\dd s\,\rho(\dd r)$. Given $\omega$, we define a new process $(L_x)_{x\in\Z^d}$ by
\begin{equation}
 L_x(t)=\sigma^2B_x(t)+\int_{[0,t]\times[-1,\infty)}\log(1+r) N_x(\dd s\dd r)
\label{eq:potential}
\end{equation}
with the convention $\log 0=-\infty$, and $-\infty$ being an absorbing state for this process. For a path $x\colon\R_+\to\Z^d$ that is right-continuous and has left limits everywhere (c\'adl\'ag), we define the Hamiltonian by
\begin{align*}
H_t(\omega,x)=\int_{[0,t]}\dd L_{x(s)}(s)=\sum_{y\in\Z^d}\int_{[0,t]}\1_{\{x(s)=y\}}\,\dd L_{y}(s).
\end{align*}
Let $(X=(X_t)_{t\geq 0},P^\kappa)$ denote the random walk which starts at the origin and jumps to a site chosen uniformly from the nearest neighbor sites at rate $\kappa>0$. The polymer measure of $P^\kappa$ is the random probability measure $\mu_{\omega,t}^\kappa$ defined by
\begin{align*}
\mu_{\omega,t}^\kappa(\dd X)= \frac{1}{Z_{\omega,t}^\kappa}e^{H_t(\omega,X)} P^\kappa(\dd X),
\end{align*}
with the convention $e^{-\infty}=0$, where the normalizing constant is given by $Z_{\omega,t}^\kappa= E^\kappa[e^{H_t(\omega,X)}]$. This probability measure gives more weight to paths along which the environment is increasing, and discourages paths that observe a decreasing environment. In particular, we interpret the set $\{(t,x)\colon\omega_x(t)=\omega_x(t-)-1\}$ as hard obstacles in space-time and note that the polymer is conditioned to avoid this set.

One may wonder why we introduce the process $\omega$ in~\eqref{eq:noise} first and transform it to $L$ in~\eqref{eq:potential}. This is mostly due to historical reasons. We regard $Z_{\omega,t}$ as the ``partition function'' of Gibbs measure $\mu_{\omega,t}^\kappa$, but in most of the earlier studies~\cite{ACM92,Nob97,shiga,shiga2,shiga3,mountford,thesis}, it is regarded as the solution of a stochastic partial differential equation called the parabolic Anderson model. More precisely, the point-to-point partition function
\begin{equation}
Z_{\omega,t,x}^\kappa\coloneqq E^\kappa\big[e^{H_t(\omega,X)}\1_{\{X(t)=x\}}\big] 
\label{eq:P2P}
\end{equation}
is equal to $u(t,x)$, where $u$ denotes the solution to the following initial value problem for a stochastic heat equation with L\'evy noise,
\begin{equation}
\begin{split}
\tfrac{\partial}{\partial s} u(s,x)&=\tfrac{\kappa}{2d} (\Delta u)(s,x) + u(s,x) \dd\omega_x(s), \\
u(0,\cdot)&=\1_{\{0\}}(\cdot),
\end{split}
\label{eq:PAM}
\end{equation}
where $\Delta$ denotes the discrete Laplacian. 
\begin{remark}
In the literature on random media, it is customary to distinguish the quenched (fixed media) and annealed (averaged media) models. For instance, the above $Z_{\omega,t}^\kappa$ is the quenched partition function whereas the annealed partition function is $\E[Z_{\omega,t}^\kappa]$. However, the annealed model in our setting turns out to be too simple to be interesting. For this reason, we concentrate on the quenched model and do not make this distinction throughout this article. 
\end{remark}

Before reviewing some known results, we introduce more general notation that will be useful later. We use bold symbols to highlight multi-dimensional parameters. Let $\kkappa=(\kappa_e)_{|e|_1=1}\in(0,\infty)^{2d}$, and write $P^{\kkappa}$ for the law of the random walk with generator
\begin{align*}
(L^{\kkappa}f)(x)=\sum_{|e|_1=1} \frac{\kappa_e}{2d} \big(f(x+e)-f(x)\big).
\end{align*}
We write $Z^{\kkappa}_{\omega,t}$ (resp. $Z^{\kkappa}_{\omega,t,x}$) for the corresponding partition function (resp. point-to-point partition function). Note that $P^\kappa=P^{\kappa\mathbf{1}}$, where $\mathbf 1=(1,\dots,1)$. Let us first list the known properties of the partition functions:
\begin{thmx}\label{thm:known}
\begin{enumerate}
\item[(i)] For every $\kkappa\in(0,\infty)^{2d}$ and $x\in\R^d$, there exist $\p(\kkappa),\p(\kkappa,x)\in\R$ such that almost surely,
\begin{alignat}{2}
\lim_{t\to\infty}\frac 1t\log Z_{\omega,t}^\kkappa&=\lim_{t\to\infty}\frac 1t\E\big[\log Z_{\omega,t}^\kkappa\big]&&=\p(\kkappa),\\
\lim_{t\to\infty,t\in\N}\frac 1t\log Z_{\omega,t,[tx]}^\kkappa&=\lim_{t\to\infty}\frac 1t\E\big[\log Z_{\omega,t,[tx]}^\kkappa\big]&&=\p(\kkappa,x).\label{eq:p2p}
\end{alignat}
Moreover, the function $\kappa\mapsto \p(\kappa\mathbf1)$ is continuous.
\item[(ii)]Let $\a\coloneqq\frac{\sigma^2}2+\int_{[-1,\infty)}r\rho(\dd r)$. The process $(W_t^\kkappa)_{t\geq0}$ defined by $W_t^\kkappa\coloneqq Z_t^\kkappa e^{-\a t}$ is a non-negative martingale, and its almost sure limit $W_\infty^\kkappa\coloneqq\lim_{t\to\infty}W_t^\kkappa$ satisfies a zero-one law,
\begin{align*}
\P(W_\infty^\kkappa=0)\in\{0,1\}.
\end{align*}
The two cases are referred to as \textbf{strong disorder} ($W_\infty^\kkappa=0$) and \textbf{weak disorder} ($W_\infty^\kkappa>0$).
\item[(iii)]There exist critical values $0<\overline\kappa_{\rm cr}(d)\leq\kappa_{\rm cr}(d)\leq\kappa_{\rm cr}^{L^2}(d)\leq \infty$ such that
\begin{itemize}
 \item $\p(\kappa\mathbf1)<\a$ for $\kappa<\overline{\kappa}_{\rm cr}$, and $\p(\kappa\mathbf1)=\a$ for $\kappa\geq \overline\kappa_{\rm cr}$.
 \item $W_\infty^{\kappa\mathbf1}=0$ for $\kappa<\kappa_{\rm cr}$, and $W_\infty^{\kappa\mathbf1}>0$ for $\kappa>\kappa_{\rm cr}$.
 \item $(W_t^{\kappa\mathbf1})_{t\geq 0}$ is $L^2$-bounded if and only if $\kappa>\kappa_{\rm cr}^{L^2}$.
\end{itemize}
 \item[(iv)] In dimension $d\geq 3$, all critical values are finite and $\kappa_{\rm cr}(d)<\kappa_{\rm cr}^{L^2}(d)$.
\end{enumerate}
\end{thmx}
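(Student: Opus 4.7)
The plan is to treat the four items of Theorem~\ref{thm:known} in turn, as they rely on rather different techniques.

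For (i), existence of $\p(\kkappa)$ comes from Kingman's subadditive ergodic theorem: the Markov property at an integer time $s$ and the independence of the environment in disjoint time strips give, after bounding a sum by its maximum term, subadditivity of $\E[\log Z^{\kkappa}_{\omega,t}]$ up to $O(\log t)$. The almost-sure convergence follows from a concentration estimate viewing $\log Z^{\kkappa}_{\omega,t}$ as a sum of martingale differences over unit time-slabs of the environment. The point-to-point version is analogous, with concatenation through an intermediate space-time point $(s, [sx])$ giving superadditivity of $\E[\log Z^{\kkappa}_{\omega,t,[tx]}]$. Continuity of $\kappa\mapsto \p(\kappa\1)$ follows from a coupling between $P^{\kappa\1}$ and $P^{\kappa'\1}$ on finite time intervals via a Radon--Nikodym derivative uniformly close to one.

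For (ii), the martingale property reduces to the L\'evy--Khintchine identity $\E[e^{L_y(t) - L_y(s)} \mid \F_s] = e^{\a(t-s)}$; the substitution $\log(1+r)$ in the jump part of~\eqref{eq:potential} is precisely what replaces $e^{\log(1+r)} - 1 = r$ in the L\'evy exponent, producing the linear term $\int r\,\rho(\dd r)$. The zero--one law uses the Markov decomposition $Z^{\kkappa}_{\omega,t+s} = \sum_y Z^{\kkappa}_{\omega,s,y}\,\widetilde Z^{\kkappa}_{t,y}$, where $\widetilde Z^{\kkappa}_{t,y}$ is a spatially shifted copy of $Z^{\kkappa}_{\omega,t}$ built from the environment after time $s$. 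Sending $t \to \infty$ yields $W^{\kkappa}_\infty = e^{-\a s}\sum_y Z^{\kkappa}_{\omega,s,y}\,\widetilde W^{\kkappa,y}_\infty$, and since the $\widetilde W^{\kkappa,y}_\infty$ are i.i.d.\ in $y$ and equidistributed with $W^{\kkappa}_\infty$, any intermediate value $0 < \P(W^{\kkappa}_\infty > 0) < 1$ is incompatible with this infinite independent replication.

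For (iii) and (iv), the chain $\overline\kappa_{\rm cr} \leq \kappa_{\rm cr} \leq \kappa^{L^2}_{\rm cr}$ is essentially definitional: $L^2$-boundedness of $(W^{\kkappa}_t)$ implies $W^{\kkappa}_\infty > 0$ by Doob, and $W^{\kkappa}_\infty > 0$ combined with the trivial bound $\tfrac{1}{t}\log Z^{\kkappa}_t \geq \a + \tfrac{1}{t}\log W^{\kkappa}_t$ forces $\p(\kkappa) = \a$. The real work is showing that each of the three conditions is monotone in $\kappa$, so that the critical sets are genuine half-lines. For $L^2$-boundedness this is immediate from the overlap representation
\begin{align*}
\E\bigl[(W^{\kappa\1}_t)^2\bigr] = E^{\kappa\1}\otimes E^{\kappa\1}\Bigl[\exp\Bigl(\lambda \int_0^t \1_{\{X_u = X'_u\}}\,\dd u\Bigr)\Bigr]
\end{align*}
(with $\lambda$ the variance of the increment of $\omega_0$), which is manifestly decreasing in $\kappa$ and also gives $\kappa^{L^2}_{\rm cr}(d) < \infty$ in $d \geq 3$ because $X - X'$ is then transient. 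For the weak-disorder regime the monotonicity is subtler and I would appeal to a Birkner-type monotone coupling between polymer measures at different jump rates. The main obstacle is the strict inequality $\kappa_{\rm cr}(d) < \kappa^{L^2}_{\rm cr}(d)$, which I expect to prove by a fractional-moment argument on a partition function restricted to paths of controlled self-overlap, showing that weak disorder survives in a nontrivial left neighbourhood of $\kappa^{L^2}_{\rm cr}(d)$.
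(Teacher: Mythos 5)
Your sketches for (i), (ii), and the ``definitional'' chain $\overline\kappa_{\rm cr}\le\kappa_{\rm cr}\le\kappa_{\rm cr}^{L^2}$ in (iii) follow the standard route and essentially match what the paper does (the paper's proof of this theorem is mostly an assembly of citations to Shiga, Furuoya--Shiga, Cranston--Mountford--Shiga, Comets--Yoshida, Birkner, and the second author's thesis; the only computation it carries out is $\E[Z_t^{\kkappa}]=e^{\a t}$ via the Laplace exponent of $L_0$, which is your L\'evy--Khintchine step). Your overlap representation for the second moment is correct and does give both the monotonicity of $L^2$-boundedness and $\kappa_{\rm cr}^{L^2}(d)<\infty$ for $d\ge 3$ by transience. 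One caveat on (ii): in the hard-obstacle case ($\rho(\{-1\})>0$) the coefficients $Z_{\omega,s,y}$ in your decomposition can all vanish, so ``infinite independent replication'' alone does not close the argument; this is exactly why the paper leans on Shiga's Theorem 1.1 rather than the bare Comets--Yoshida tail argument.

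The genuine gaps are in the hard parts of (iii) and in (iv). First, for $\overline\kappa_{\rm cr}$ and $\kappa_{\rm cr}$ to exist as thresholds you need monotonicity in $\kappa$ of $\p(\kappa\mathbf1)$ and of the weak-disorder property, and you only assert this. There is no ``Birkner-type monotone coupling'' that delivers it; the mechanism the paper uses is the convolution ordering $P^{(\kappa+\delta)\mathbf1}=P^{\kappa\mathbf1}*P^{\delta\mathbf1}$ combined with the concave stochastic-order comparison of partition functions (Theorem~\ref{thm:comp}, from \cite{self_comp}): taking $f=\log$ gives monotonicity of $\E[\log Z_t^{\kappa\mathbf1}]$ and hence of $\p(\kappa\mathbf1)$, and taking $f(x)=x\wedge K$ (after the Comets--Yoshida reduction of $W_\infty^{\kappa\mathbf1}>0$ to uniform integrability, i.e.\ $L^1$-convergence, of the martingale) gives monotonicity of weak disorder. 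Without some such argument your critical sets need not be half-lines. Second, for the strict inequality $\kappa_{\rm cr}<\kappa_{\rm cr}^{L^2}$ your proposed fractional-moment argument points in the wrong direction: fractional moments $\E[(W_t)^{\gamma}]$ with $\gamma<1$ are the standard tool for proving \emph{strong} disorder (upper bounds on the free energy), not for establishing weak disorder beyond the $L^2$ threshold. The argument that works is Birkner's size-biasing of the martingale, which reduces weak disorder to a \emph{quenched} exponential-moment condition on the collision local time of two independent walks (one of which is frozen), and the strict gap between this quenched condition and the annealed $L^2$ condition is precisely the content of \cite[Theorem~7]{birkner_thesis} that the paper invokes. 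As it stands, your proposal does not contain a viable route to either the existence of $\kappa_{\rm cr}$ as a threshold or to part (iv).
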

\begin{remark}
\begin{enumerate}
 \item The functions $\p(\kkappa)$ and $\p(\kkappa,x)$ are called the free energy and point-to-point free energy, respectively. From (ii), it in particular follows that $\p(\kkappa)\le \a$ for any $\kkappa\in(0,\infty)^{2d}$. This is called the annealed bound (cf.~\cite[(2.1.3)]{comets_st_flour}).  
 \item When $d\le 2$, we believe $\overline\kappa_{\rm cr}(d)=\kappa_{cr}=\infty$ since the corresponding result is proved in~\cite{LacoinVSD} for the discrete-time model.
\end{enumerate}
\end{remark}

In this article, we study the large deviation principle (LDP) for the endpoint distribution under $\mu_{\omega,t}^\kappa$. The following abstract existence result holds:

\begin{thmx}\label{thm:existence}
For every $\kappa>0$ and every $d\ge 1$, the sequence $(\mu_{\omega,t}^{\kappa}(X_t/t\in\cdot))_{t> 0}$ satisfies an LDP with deterministic, good, convex rate function $J^\kappa(x)\coloneqq\p({\kappa\mathbf1})-\p({\kappa\mathbf1},x)$, $\P$-almost surely.
\end{thmx}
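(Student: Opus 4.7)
The starting point is the key identity
\[
\mu_{\omega,t}^{\kappa}(X_t/t \in A) = \frac{1}{Z_{\omega,t}^{\kappa\mathbf{1}}} \sum_{y \in \Z^d:\, y/t \in A} Z_{\omega,t,y}^{\kappa\mathbf{1}},
\]
which reduces the LDP to controlling the exponential growth of the point-to-point partition functions uniformly in their endpoints. Before attacking the LDP itself I would establish concavity of $x\mapsto \p(\kappa\mathbf{1},x)$ (equivalently, convexity of $J^\kappa$) via the superadditive decomposition
\[
Z_{\omega,s+t,y+z}^{\kappa\mathbf{1}} \;\ge\; Z_{\omega,s,y}^{\kappa\mathbf{1}} \cdot Z_{\theta_{s,y}\omega,\,t,\,z}^{\kappa\mathbf{1}},
\]
obtained by forcing the path to pass through the space-time site $(s,y)$ and using the Markov property of the walk together with the independence of $\omega$ over disjoint time strips (here $\theta_{s,y}$ denotes the appropriate space-time shift of the environment). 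Taking logs, dividing by $s+t$ with $s,t\in\N$ and $s/(s+t)\to\alpha$, and invoking~\eqref{eq:p2p} yields concavity of $\p(\kappa\mathbf{1},\cdot)$ on $\Q^d$, which extends to $\R^d$ by continuity.

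For the upper bound on a closed $F$ I would separate a large ball $F_R = F\cap \{|x|\le R\}$ from its tail. On $F_R$ the sum in the key identity contains $O(t^d)$ terms, and concavity upgrades the pointwise convergence~\eqref{eq:p2p} to uniform convergence on $\{|x|\le R\}$ (a pointwise limit of concave functions is automatically locally uniform on the interior of the limit's effective domain); this gives
\[
\limsup_{t\to\infty}\tfrac{1}{t}\log \mu_{\omega,t}^{\kappa}(X_t/t \in F_R) \;\le\; \sup_{x\in F_R}\p(\kappa\mathbf{1},x)-\p(\kappa\mathbf{1}) \;=\; -\inf_{x\in F_R}J^{\kappa}(x).
\]
For the tail $\{|y|\ge \lambda t\}$ I would use the annealed identity $\E[Z_{\omega,t,y}^{\kappa\mathbf{1}}]=e^{\a t}P^{\kappa\mathbf{1}}(X_t=y)$ together with the Chernoff bound $P^{\kappa\mathbf{1}}(|X_t|\ge \lambda t)\le e^{-c(\lambda)t}$ with $c(\lambda)\to\infty$ (the displacement is dominated by the Poisson-distributed number of jumps), Markov's inequality, and Borel--Cantelli along integer $t$ to obtain $\tfrac{1}{t}\log\sum_{|y|\ge \lambda t}Z_{\omega,t,y}^{\kappa\mathbf{1}}\le \a-c(\lambda)+o(1)$ almost surely. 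Since $\p(\kappa\mathbf{1})>-\infty$, choosing $\lambda$ large makes this tail negligible, which simultaneously gives goodness: $J^\kappa\equiv+\infty$ outside a bounded set, whose sublevel sets are closed by convexity.

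The lower bound is easier: for open $G$ and rational $x\in G$, the single-endpoint inequality $\mu_{\omega,t}^{\kappa}(X_t/t\in G)\ge Z_{\omega,t,[tx]}^{\kappa\mathbf{1}}/Z_{\omega,t}^{\kappa\mathbf{1}}$ holds for large $t\in\N$, and~\eqref{eq:p2p} gives $\liminf_{t\to\infty,\,t\in\N}\tfrac{1}{t}\log$ equal to $-J^\kappa(x)$; taking $\sup_{x}$ and using continuity of $J^\kappa$ on the interior of its effective domain closes the argument. Non-integer times are handled by comparing $Z_{\omega,t}^{\kappa\mathbf{1}}$ and $Z_{\omega,t,y}^{\kappa\mathbf{1}}$ to their values at $\lfloor t\rfloor$, with the $o(t)$ difference absorbed using a.s.\ control on the environment over unit-length time intervals. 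The main obstacle in this scheme is the promotion of the pointwise convergence~\eqref{eq:p2p} to locally uniform convergence in the endpoint $y$; this is exactly what concavity of $\p(\kappa\mathbf{1},\cdot)$ achieves, and it is the same ingredient that yields convexity of $J^\kappa$.
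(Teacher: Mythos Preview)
Your integer-time argument is sound and in fact more self-contained than the paper's, which defers to the concentration-based scheme of Carmona--Hu; your route via superadditivity, concavity of $x\mapsto\p(\kappa\mathbf{1},x)$, and the automatic local uniformity of pointwise limits of concave functions is a clean alternative. One small slip: $J^\kappa$ is \emph{not} identically $+\infty$ outside a bounded set (the continuous-time walk has unbounded speed, so $\p(\kappa\mathbf{1},x)>-\infty$ for every $x$); what your tail estimate actually yields is $J^\kappa(x)\to\infty$ as $|x|\to\infty$, which together with lower semicontinuity still suffices for goodness --- this is also how the paper argues it, via the comparison $J^\kappa(x)\ge I^\kappa(x)-(\a\vee 0)-\p(\kappa\mathbf1)$.

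The genuine gap is the passage to non-integer $t$ for the \emph{lower} bound. Your sentence ``comparing $Z_{\omega,t,y}^{\kappa\mathbf{1}}$ to its value at $\lfloor t\rfloor$, with the $o(t)$ difference absorbed using a.s.\ control on the environment over unit-length time intervals'' breaks down when the L\'evy measure has an atom at $-1$, i.e.\ when hard obstacles are present. In that regime the paper records (see~\eqref{eq:non_existence}) that almost surely
\[
\liminf_{t\to\infty}\tfrac{1}{t}\log Z_{\omega,t,[tx]}^{\kappa\mathbf{1}}=-\infty,
\]
because along a sequence of times $t\to\infty$ there is a hard obstacle at site $[tx]$ arbitrarily close behind time $t$, forcing the path to leave and return in a vanishingly short window; no uniform control over a unit interval can absorb a contribution that is literally super-exponentially small. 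The paper's remedy is to replace the single terminal point by the pair $\{[tx],[tx]+e_1\}$: one restricts to paths with $X_{\lfloor t\rfloor}=[tx]$ and $X_s\in\{[tx],[tx]+e_1\}$ for all $s\in[\lfloor t\rfloor,t]$, and shows that the resulting correction factor $A(\omega,t,[tx])$ has a negative fractional moment bounded uniformly in $t\in[0,1]$, so that Borel--Cantelli gives $\tfrac{1}{t}\log A\to 0$. The point is that a single obstacle cannot block both neighbouring sites simultaneously. Your scheme needs this (or an equivalent) two-point device to close the lower bound over all real $t>0$; the upper bound direction is not affected, since there one only needs to bound $Z_{\omega,t,y}$ from above by a $\lfloor t\rfloor$-quantity.
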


Theorems \ref{thm:known} and \ref{thm:existence} are well-known, at least for the related discrete-time polymer model. In the appendix, we briefly outline how to prove them in our continuous-time setting by providing some references. Our main result compares $J^\kappa$ for $\kappa\ge \overline\kappa_{\rm cr}$ with the large deviation rate function $I^\kappa$ of $(P^\kappa(X_t/t\in\cdot))_{t> 0}$ which has the following explicit form:
\begin{align}\label{eq:rate_fct}
I^\kappa(x_1,...,x_d)=\sum_{i=1}^d \left\{x_i\sinh^{-1}\left(\frac{d x_i}{\kappa}\right)-\sqrt{x_i^2+\frac{\kappa^2}{d^2}}+\frac\kappa d\right\}.
\end{align}

\begin{theorem}\label{thm:main}
Let $d\geq 3$. 
Then the following hold:
\begin{enumerate}
\item[(i)] If $\kappa>\overline\kappa_{\rm cr}$, then $J^\kappa$ and $I^\kappa$ coincide in a neighborhood of the origin.  
\item[(ii)] If $\kappa\geq\overline\kappa_{\rm cr}$, then $J^\kappa(x) \ge I^\kappa(x)$ for all $x$.
\end{enumerate}
\end{theorem}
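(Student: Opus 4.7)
The natural tool is the Cram\'er tilt of the random walk. For $\lambda\in\R^d$ set $\kkappa(\lambda)_e:=\kappa e^{\lambda\cdot e}$ and let $\Lambda^\kappa(\lambda):=\frac{\kappa}{d}\sum_{i=1}^d(\cosh\lambda_i-1)$ denote the log-moment generating function of $P^\kappa$. A direct computation with the CTMC path density gives $dP^{\kkappa(\lambda)}/dP^\kappa=\exp(\lambda\cdot X_t-t\Lambda^\kappa(\lambda))$, and since $H_t(\omega,X)$ depends on $X$ only through its trajectory (not through its law), this yields the pointwise identity
\[
Z^{\kappa\mathbf 1}_{\omega,t,y}=e^{-\lambda\cdot y+t\Lambda^\kappa(\lambda)}\,Z^{\kkappa(\lambda)}_{\omega,t,y}.
\]
Choosing $\lambda=\lambda^*(x)$ with $\nabla\Lambda^\kappa(\lambda^*(x))=x$, so that $\Lambda^\kappa(\lambda^*(x))-\lambda^*(x)\cdot x=-I^\kappa(x)$, and passing to the almost sure limit produces the tilt identity
\[
\p(\kappa\mathbf 1,x)=-I^\kappa(x)+\p(\kkappa(\lambda^*(x)),x).
\]

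Part (ii) is then immediate: the annealed bound yields $\p(\kkappa(\lambda^*(x)),x)\le\p(\kkappa(\lambda^*(x)))\le\a$, and since $\kappa\ge\overline\kappa_{\rm cr}$ implies $\p(\kappa\mathbf 1)=\a$, we conclude $J^\kappa(x)=\p(\kappa\mathbf 1)-\p(\kappa\mathbf 1,x)\ge I^\kappa(x)$.

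For part (i), the tilt identity reduces the equality $J^\kappa=I^\kappa$ near $0$ to proving that $\p(\kkappa(\lambda^*(x)),x)=\a$ for all $x$ in a neighborhood of the origin. Since the asymptotic drift under $P^{\kkappa(\lambda^*(x))}$ is exactly $\nabla\Lambda^\kappa(\lambda^*(x))=x$, this is the point-to-point free energy of the tilted polymer along its own drift direction. I would then invoke the general fact that if $\p(\kkappa)=\a$ then $\p(\kkappa,v_\kkappa)=\a$, where $v_\kkappa$ denotes the drift of $P^\kkappa$. This follows from combining the Laplace-type identity $\p(\kkappa)=\sup_y\p(\kkappa,y)$ with the annealed bound $\p(\kkappa,y)\le\a-I^\kkappa(y)$ and the concavity (hence upper semicontinuity) of $y\mapsto\p(\kkappa,y)$: the unique zero of $I^\kkappa$ is $v_\kkappa$, so any sequence with $\p(\kkappa,y_n)\to\a$ must satisfy $y_n\to v_\kkappa$, forcing $\p(\kkappa,v_\kkappa)=\a$. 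Applying this with $\kkappa=\kkappa(\lambda^*(x))$ further reduces (i) to showing that $\p(\kkappa(\lambda^*(x)))=\a$ for all $x$ in a neighborhood of $0$.

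This last reduction is the main obstacle. Continuity of $\kkappa\mapsto\p(\kkappa)$ at $\kappa\mathbf 1$ (which extends Theorem~\ref{thm:known}(i) to the asymmetric case by analogous subadditivity arguments) yields only $\p(\kkappa(\lambda^*(x)))\to\a$ as $x\to 0$, not strict equality on a neighborhood. Upgrading it is tantamount to showing that $\{\kkappa:\p(\kkappa)=\a\}$ contains a neighborhood of $\kappa\mathbf 1$ whenever $\kappa>\overline\kappa_{\rm cr}$. My attempt would be to establish a coordinatewise monotonicity of $\p(\kkappa)$ in the jump rates, perhaps via a coupling that introduces extra independent jumps together with the nonnegative-mean structure of the L\'evy environment; granted such monotonicity, for $|\lambda^*(x)|$ sufficiently small the minimum $\min_e\kappa_e(\lambda^*(x))$ still exceeds $\overline\kappa_{\rm cr}$, whence $\p(\kkappa(\lambda^*(x)))\ge\p((\min_e\kappa_e(\lambda^*(x)))\mathbf 1)=\a$. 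This is where I expect the authors' advertised ``simple argument'' to do its real work.
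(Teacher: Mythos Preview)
Your proposal is correct and lands on precisely the paper's mechanism: the ``coordinatewise monotonicity'' you conjecture is exactly Theorem~\ref{thm:comp}, proved not through a nonnegative-mean coupling but via Jensen's inequality and the translation invariance of $\omega$ under the convolution decomposition $P^{\kkappa(\lambda)}=P^{\underline\kappa(\lambda)\mathbf 1}*P^{\ddelta_1(\lambda)}$; this immediately gives $\p(\kkappa(\lambda))\ge\p(\underline\kappa(\lambda)\mathbf 1)=\a$ for $|\lambda|$ small, which is the reduction you isolated. The only substantive difference in execution is that the paper bypasses your point-to-point route entirely: rather than proving the auxiliary lemma that $\p(\kkappa)=\a$ forces $\p(\kkappa,v_\kkappa)=\a$, it works with the full partition function, obtains $\widehat\Lambda^\kappa(\lambda)=\p(\kkappa(\lambda))-\p(\kappa\mathbf 1)+\Lambda^\kappa(\lambda)=\Lambda^\kappa(\lambda)$ directly, and then invokes G\"artner--Ellis. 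Your detour through the point-to-point free energy is sound (the concavity/usc argument you sketch works), but it is extra scaffolding that the paper's choice of variable avoids.
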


The same conclusion has been proved in~\cite[Theorem~6.1]{RAS14} and~\cite[Exercise 9.1]{comets_st_flour} for the discrete-time directed polymer model under the stronger assumption of $L^2$-boundedness, which would correspond to $\kappa>\kappa_{\rm cr}^{L^2}$ in our notation. The continuous-time model has the advantage that the parameter of the model changes from the ``inverse temperature'' $\beta$ to the jump rate $\kappa$, so that we can use a convolution property of the continuous-time random walk, which allows us to compare the partition functions for different jump rates; see Theorem~\ref{thm:comp} below. 

\begin{remark}
The coincidence and difference of the quenched and annealed rate functions are studied also in the setting of random walk in random environment:~\cite{Yil09,YZ10,Yil11,RASY17b,BMRS19}. 
\end{remark}

It is an important open problem (see, e.g.,~\cite[Open Problem 9.3]{comets_st_flour}) to prove that the rate function for the directed polymer model is strictly convex near the origin. One of the major reasons is that it is a key to prove the so-called scaling relation, as is proved in~\cite{AD13}. Although Theorem~\ref{thm:main} gives an affirmative answer, we think it is of limited interest in this aspect (except possibly for $\kappa=\overline\kappa_{\rm cr}$) since the scaling exponents are known in the weak disorder phase. Apart from the results in weak disorder, the strict convexity is known only for (i) certain exactly solvable models~\cite{seppalainen} for which the rate function is explicitly known, and (ii) the Brownian polymer model in continuous space~\cite{cometsyoshida} for which the rate function agrees with that of the Brownian motion in both strong and weak disorder. The last result is due to a special translation invariance property of the Brownian bridge and the Poisson point process. 

For the simple random walk model, the bridge loses entropy as the endpoint moves away from the origin, and we expect that the polymer rate function is strictly larger than that of simple random walk in strong disorder. Moreover, it is conjectured that strong disorder holds at the critical value $\kappa=\overline\kappa_{\rm cr}$, so we expect that the conclusion of Theorem~\ref{thm:main}~(i) does not extend to $\kappa\leq \overline\kappa_{\rm cr}$.

\section{Proof of the main result}

The proof of Theorem \ref{thm:main} relies on the comparison result from \cite{self_comp}. If $(X=(X_t)_{t\geq 0}, Q)$ and $(X'=(X'_t)_{t\geq 0}, Q')$ are two independent processes on the space of c\'adl\'ag paths on $\Z^d$, we write $Q*Q'$ for the law of $(X_t+X_t')_{t\geq 0}$. We write $P\preceq_*Q$ if there exists a probability measure $Q'$ such that $P=Q*Q'$. Note that
\begin{align*}
P^{\kkappa_1+\kkappa_2}=P^{\kkappa_1}*P^{\kkappa_2}
\end{align*}
for all $\kkappa_1,\kkappa_2\in(0,\infty)^{2d}$, and therefore both $P^{\kkappa_1+\kkappa_2}\preceq_* P^{\kkappa_1}$ and $P^{\kkappa_1+\kkappa_2}\preceq_* P^{\kkappa_2}$ hold.

\begin{thmx}[{\cite[Theorem 1]{self_comp}}]\label{thm:comp}
Let $P$ and $Q$ be two probability measures on c\'adl\'ag paths on $\Z^d$, and write 
\begin{align*}
Z^P_{\omega,t}\coloneqq E_P[e^{H_t(\omega,X)}]\quad\text{ and }\quad Z^Q_{\omega,t}\coloneqq E_Q[e^{H_t(\omega,X)}]
\end{align*}
for the associated partition functions. Then $P\preceq_*Q$ implies that for any $f\colon[0,\infty)\to[-\infty,\infty)$ concave,
\begin{align*}
\E\big[f(Z^Q_t)\big]\leq\E\big[f(Z^P_t)\big].
\end{align*}
\end{thmx}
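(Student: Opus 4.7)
The plan is to exploit the decomposition $P=Q*Q'$ together with the spatial i.i.d.\ structure of $\omega$, via a conditional Jensen inequality applied to the concave function $f$.

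First, I would unpack $Z^P_{\omega,t}$ using the product representation of $P$. Writing $X=Y+Y'$ with $Y\sim Q$ and $Y'\sim Q'$ independent, Fubini gives
\begin{align*}
Z^P_{\omega,t}=E_{Q'}\!\left[\Phi(\omega,Y')\right],\qquad \Phi(\omega,y')\coloneqq E_Q\!\left[e^{H_t(\omega,Y+y')}\right].
\end{align*}
The point is that $Y'$ is independent of the environment, so $\Phi(\omega,\cdot)$ can be analyzed pathwise in $y'$ while treating $\omega$ as an external random field.

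Second, I would establish the distributional identity
\begin{align*}
\Phi(\omega,y')\isDistr Z^Q_{\omega,t}\qquad\text{for every deterministic c\`adl\`ag path }y'\colon[0,t]\to\Z^d.
\end{align*}
Using the representation \eqref{eq:potential} of $L_x$ as a Brownian component plus a Poisson integral and the definition of the Hamiltonian, one has
\begin{align*}
H_t(\omega,Y+y')=\sigma^2\!\int_0^t\!\dd B_{Y(s)+y'(s)}(s)+\int_{[0,t]\times[-1,\infty)}\!\!\log(1+r)\sum_{z\in\Z^d}\1_{\{Y(s)+y'(s)=z\}}N_z(\dd s\dd r).
\end{align*}
Because $(B_x)_{x\in\Z^d}$ and $(N_x)_{x\in\Z^d}$ are i.i.d.\ over sites with intensities invariant under any spatial bijection, the shifted field $\tilde\omega^{y'}$ defined by $\tilde B^{y'}_x(\cdot)\coloneqq$ the Brownian component traversed along $x+y'(\cdot)$ and $\tilde N^{y'}_x(A\times\cdot)\coloneqq N_{x+y'(s)}(A\times\cdot)$ has the same law as $\omega$. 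Under this re-labelling $\Phi(\omega,y')$ becomes exactly $Z^Q_{\tilde\omega^{y'},t}$, hence has the same distribution as $Z^Q_{\omega,t}$.

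Third, with these two ingredients the theorem follows from a single application of Jensen's inequality. For concave $f$,
\begin{align*}
f\big(Z^P_{\omega,t}\big)=f\!\left(E_{Q'}[\Phi(\omega,Y')]\right)\geq E_{Q'}\!\left[f\big(\Phi(\omega,Y')\big)\right].
\end{align*}
Taking $\E$, exchanging with $E_{Q'}$ by Tonelli (the integrand is bounded above by $f(\Phi)\vee 0$ plus a deterministic lower bound on $f\circ Z^Q$, handling the possible $-\infty$ value of $f$ by monotone truncation), and invoking the distributional identity yields
\begin{align*}
\E\!\left[f(Z^P_{\omega,t})\right]\geq E_{Q'}\!\left[\E\!\left[f(\Phi(\omega,Y'))\right]\right]=E_{Q'}\!\left[\E\!\left[f(Z^Q_{\omega,t})\right]\right]=\E\!\left[f(Z^Q_{\omega,t})\right].

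\end{align*}

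The main obstacle is the rigorous justification of the shift invariance in the second step: the path $y'$ may jump, so one must be careful that the ``re-labelling'' really produces a measurable field with the correct law rather than merely agreeing in finite-dimensional marginals. The clean way is to use the explicit Poisson construction in \eqref{eq:noise}--\eqref{eq:potential} together with the observation that for each fixed $y'$ the map $(s,x,r)\mapsto(s,x-y'(s),r)$ is a measurable bijection of $[0,t]\times\Z^d\times[-1,\infty)$ preserving the intensity $\dd s\otimes\mathrm{counting}\otimes\rho$, and to treat the continuous Brownian part by an analogous time-changed construction (which is painless because the collection $(B_x)_{x\in\Z^d}$ is exchangeable). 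A secondary nuisance is admitting $f$ with values in $[-\infty,\infty)$, which I would dispatch by applying the inequality to $f\vee(-M)$ and sending $M\to\infty$ via monotone convergence.
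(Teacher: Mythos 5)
Your proposal is correct and follows essentially the same route as the paper's own sketch: write $Z^P_{\omega,t}=E_{Q'}[\,E_Q[e^{H_t(\omega,Y+Y')}]\,]$, apply Jensen's inequality for the concave $f$ to pull $E_{Q'}$ outside, and then use that the environment seen along any fixed path $y'$ has the same law as $\omega$, so that the inner quantity is distributed as $Z^Q_{\omega,t}$. Your additional care about the measurable re-labelling of the Poisson and Brownian fields along a c\`adl\`ag path and the truncation $f\vee(-M)$ simply supplies rigor that the paper's sketch leaves implicit.
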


The maximal elements with respect to $\preceq_*$ are the Dirac measures on a deterministic path. Intuitively, the partition function is smaller (in the concave stochastic order) if there is ``less randomness'' in the underlying random walk, in the sense that its law is large with respect to $\preceq_*$. 
We sketch the proof of Theorem \ref{thm:comp} for the readers' convenience. Following the above notation and using Jensen's inequality, we have 
\begin{align*}
\E\big[f(Z^P_t)\big]&=\E\big[f(E_Q\otimes E_{Q'}[e^{H_t(\omega,X+X')}])\big]\\
&\ge E_{Q'}\big[\E\big[f(E_Q[e^{H_t(\omega_{X'},X)}])\big]\big],
\end{align*}
where $\omega_{X'}$ is the environment seen from $X'$. But for any fixed $X'$, the law of $\omega_{X'}$ is the same as that of $\omega$ and hence we can get rid of $E_{Q'}$ and $X'$ from the last line. 

Next, we compute the Cram\'er transform of $P^\kappa$:
\begin{lemma}\label{lem:generator}
Let $\lambda\in\R^d$ and define a probability measure $Q$ by
\begin{align*}
Q(\dd X_t)=\frac{e^{\langle\lambda, X_t\rangle}}{E^\kappa[e^{\langle\lambda, X_t\rangle}]}P^\kappa(\dd X_t).
\end{align*}
Then $((X_s)_{s\in[0,t]},Q)$ has law $P^{\kkappa(\lambda)}$, where $\kkappa(\lambda)=\{\kappa e^{\langle\lambda, e\rangle}\}_{|e|=1}\in(0,\infty)^{2d}$. 
\end{lemma}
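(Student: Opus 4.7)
This is a classical Cramér tilting calculation, and I would carry it out directly on path space. Since $X_t$ under $P^\kappa$ is a compound Poisson process with total jump rate $\kappa$ and uniform jump distribution on the $2d$ nearest neighbors, the moment generating function is immediate:
\begin{align*}
E^\kappa[e^{\langle\lambda, X_t\rangle}] = e^{c(\lambda)t}, \qquad c(\lambda) := \frac{\kappa}{2d}\sum_{|e|=1}\left(e^{\langle\lambda, e\rangle} - 1\right).
\end{align*}
Combined with the independence of increments of $X$ under $P^\kappa$, this gives the Radon--Nikodym derivative of $Q$ with respect to $P^\kappa$ restricted to the natural filtration $\F_s$ (for $s \le t$) as
\begin{align*}
M_s := \frac{E^\kappa[e^{\langle\lambda, X_t\rangle}\mid\F_s]}{E^\kappa[e^{\langle\lambda, X_t\rangle}]} = \exp\left(\langle\lambda, X_s\rangle - c(\lambda)s\right),
\end{align*}
which is a positive $P^\kappa$-martingale that is independent of the horizon $t$. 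In particular, the law of $(X_s)_{s\in[0,t]}$ under $Q$ agrees with the law of the process on $[0,t]$ tilted by $M_t$, and the latter depends only on $\lambda$ and not on $t$.

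Next I would complete the proof by a direct comparison of path-space densities. Conditionally on there being exactly $n$ jumps in $[0,s]$ at times $t_1 < \cdots < t_n$ in directions $e_1, \ldots, e_n$, the measure $P^\kappa$ assigns density
\begin{align*}
e^{-\kappa s}\prod_{i=1}^n \frac{\kappa}{2d}
\end{align*}
with respect to the product of Lebesgue measure on the time simplex and counting measure on directions. Multiplying by $M_s = \exp\bigl(\sum_{i=1}^n\langle\lambda, e_i\rangle - c(\lambda)s\bigr)$ yields
\begin{align*}
e^{-(\kappa+c(\lambda))s}\prod_{i=1}^n \frac{\kappa e^{\langle\lambda, e_i\rangle}}{2d},
\end{align*}
and since $\kappa + c(\lambda) = \frac{\kappa}{2d}\sum_{|e|=1} e^{\langle\lambda, e\rangle}$ is the total jump rate associated with $\kkappa(\lambda)$, this is exactly the density of the same configuration under $P^{\kkappa(\lambda)}$. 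Identifying the finite-dimensional distributions in this way pins down the law.

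Alternatively, and perhaps more elegantly, one can invoke the standard change-of-measure theorem for pure jump processes: at a jump in direction $e$ the ratio $M_s/M_{s-} = e^{\langle\lambda, e\rangle}$ multiplies the original jump rate $\kappa/(2d)$ to yield $\kappa e^{\langle\lambda, e\rangle}/(2d) = \kappa_e(\lambda)/(2d)$, exactly the rate prescribed by $\kkappa(\lambda)$. I do not foresee any serious obstacle; the only point that deserves a brief remark is the verification that the multiplicative-in-$X_s$ and exponential-in-$s$ structure of the tilting functional produces \emph{time-homogeneous} new jump rates, which is essentially the general fact that Cramér tilts of Lévy processes remain Lévy.
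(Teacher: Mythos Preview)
Your proof is correct, but the paper takes a different route. Rather than writing down the path-space density and comparing it to that of $P^{\kkappa(\lambda)}$, the paper simply notes that the tilted process is Markov and then identifies its generator directly from the limit definition:
\[
(L^Qf)(x)=\lim_{t\downarrow 0}\frac{1}{t}\left(\frac{E_x^\kappa[f(X_t)e^{\langle\lambda,X_t\rangle}]}{E_x^\kappa[e^{\langle\lambda,X_t\rangle}]}-f(x)\right)
=e^{-\langle\lambda,x\rangle}\bigl((L^\kappa fe^{\langle\lambda,\cdot\rangle})(x)-f(x)(L^\kappa e^{\langle\lambda,\cdot\rangle})(x)\bigr),
\]
which unwinds to $(L^{\kkappa(\lambda)}f)(x)$ in two more lines. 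This is shorter and avoids writing out the jump-time density explicitly, at the cost of needing to say (or take for granted) that $Q$ is Markov. Your argument is more self-contained in that respect, since the density comparison directly establishes the law on path space without a separate Markov-property step, and your observation that the martingale $M_s$ is independent of the horizon $t$ is a clean way to see that the tilted law is consistent across time horizons. Either approach is fine for a result this standard.
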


\begin{proof}
It is easy to see that $((X_s)_{s\in[0,t]},Q)$ is a Markov process. Thus it suffices to identify its generator:
\begin{align*}
(L^Qf)(x)&=\lim_{t\downarrow 0}\frac{1}{t}\left(\frac{E_x^\kappa[f(X_t)e^{\langle\lambda,X_t\rangle}]}{E_x^\kappa[e^{\langle \lambda, X_t\rangle}]}-f(x)\right)\\
&=e^{-\langle \lambda,x\rangle}\big((L^{\kappa}fe^{\langle \lambda,\cdot\rangle})(x)- f(x)(L^{\kappa}e^{\langle \lambda,\cdot\rangle})(x)\big)\\
&=\frac{\kappa}{2d} \sum_{|e|=1} \Big(e^{\langle \lambda,e \rangle}f(x+e)-f(x)-f(x)(e^{\langle\lambda,e\rangle}-1)\Big)\\
&=(L^{\kkappa(\lambda)}f)(x).\qedhere
\end{align*}
\end{proof}

We now prove the main result:
\begin{proof}[Proof of Theorem \ref{thm:main}]
Let $\lambda\in\R^d$. By Lemma \ref{lem:generator}, we have
\begin{align*}
\mu_{\omega,t}^{\kappa}[e^{\langle \lambda, X_t \rangle}]=\frac{Z^{\kkappa(\lambda)}_{\omega,t}}{Z^{\kappa}_{\omega,t}} e^{t \Lambda^\kappa(\lambda)},
\end{align*}
where $\Lambda^\kappa(\lambda)\coloneqq \log E^\kappa[e^{\langle \lambda,X_1 \rangle}]$. Theorem \ref{thm:known}~(i) shows that almost surely
\begin{align*}
{\widehat\Lambda}^\kappa(\lambda)\coloneqq\lim_{t\to\infty}\frac 1t\log \mu_{\omega,t}^{\kappa}[e^{\langle\lambda,X_t\rangle}]=\p(\kkappa(\lambda))-\p(\kappa\mathbf1)+\Lambda^\kappa(\lambda).
\end{align*}
We will show that for all $\lambda$ small enough,
\begin{align}\label{eq:todo}
\widehat\Lambda^\kappa(\lambda)=\Lambda^\kappa(\lambda).
\end{align}
Once we have this identification of the cumulant generating function, we can apply the G\"{a}rtner--Ellis theorem~\cite[Theorem 2.3.6 (c)]{DZ} to conclude that $(\mu_{\omega,t}^\kappa(X_t/t\in\cdot))_{t\in\N}$ satisfies an LDP near the origin with the rate function $I^\kappa$. 
In order to prove \eqref{eq:todo}, we introduce
\begin{align}
&\underline{\kappa}(\lambda)\coloneqq \kappa\min_{|e|=1}e^{\langle \lambda,e\rangle},\quad
\ddelta_1(\lambda)\coloneqq \kkappa(\lambda)-\underline{\kappa}(\lambda)\mathbf{1},\\
&\overline{\kappa}(\lambda)\coloneqq \kappa\max_{|e|=1}e^{\langle \lambda,e\rangle},\quad
{\ddelta}_2(\lambda)\coloneqq \overline{\kappa}(\lambda)\mathbf{1}-\kkappa(\lambda),
\end{align}
so that
\begin{align*}
&P^{\kkappa(\lambda)}=P^{\underline{\kappa}(\lambda)\mathbf{1}}*P^{\ddelta_1(\lambda)},\\
&P^{\kkappa(\lambda)}*P^{\ddelta_2(\lambda)}=P^{\overline{\kappa}(\lambda)\mathbf{1}}.
\end{align*}
Applying Theorem \ref{thm:comp} with $f(x)=\log x$, we get
\begin{align*}
\E\big[\log Z^{\underline{\kappa}(\lambda)}_{\omega,t}\big]
\leq \E\big[\log Z^{\kkappa(\lambda)}_{\omega,t}\big]
\le\E\big[\log Z^{\overline{\kappa}(\lambda)}_{\omega,t}\big],
\end{align*}
and Theorem \ref{thm:known}~(i) implies that 
\begin{align*}
\p(\underline\kappa(\lambda)\mathbf1)\leq \p(\kkappa(\lambda))\leq \p(\overline\kappa(\lambda)\mathbf1).
\end{align*}
Since $\kappa>\overline\kappa_{\rm cr}$ and $\lim_{|\lambda|\to 0}\underline{\kappa}(\lambda)=\lim_{|\lambda|\to 0}\overline{\kappa}(\lambda)=\kappa$, we have for all sufficiently small $\lambda$, 
\begin{align*}
\p(\underline\kappa(\lambda)\mathbf1)=\p(\kkappa(\lambda))=\p(\overline\kappa(\lambda)\mathbf1)=\p(\kappa\mathbf1)=\a
\end{align*}
and~\eqref{eq:todo} follows.

Finally, for part (ii), we have $\p(\overline\kappa_{\rm cr}\mathbf1)=\a$ by Theorem \ref{thm:known}~(ii), while $\p(\kkappa)\le \a$ holds for any $\kkappa\in(0,\infty)^{2d}$ by the annealed bound. Thus, instead of~\eqref{eq:todo}, we have for any $\lambda\in\R^d$ and $\kappa\geq\overline\kappa_{\rm cr}$, 
\begin{align*}
\widehat\Lambda^{\kappa}(\lambda) \le \Lambda^{\kappa}(\lambda)
\end{align*}
and Theorem~\ref{thm:main}~(ii) follows from this. 
\end{proof}

\begin{remark}
This argument does not extend to the regime $\kappa<\overline\kappa_{\rm cr}$.
Suppose we want to prove that the graph of $J^\kappa$ around zero has a curvature bonded away from zero, which is weaker than Theorem~\ref{thm:main}~(ii). For this purpose, it suffices to prove instead of~\eqref{eq:todo} that
\begin{equation}\label{eq:new_todo}
\p(\kkappa(\lambda)) \le \p(\kappa\mathbf1)-c|\lambda|^2 
\end{equation}
for some $c>0$ and all sufficiently small $\lambda$. Our method above is to replace $\p(\kkappa(\lambda))$ by $\p(\underline\kappa(\lambda)\mathbf1)$, but in strong disorder this should introduce an error-term that is much larger than the quadratic term. More precisely, we expect that $\kappa\mapsto \p(\kappa\mathbf1)$ is strictly increasing for $\kappa\in(0,\overline\kappa_{\rm cr})$, so that $\p(\underline{\kappa}(\lambda)\mathbf1) \approx \p(\kappa\mathbf1)-c' |\ddelta_2(\lambda)|$ for $\lambda\to 0$. However, $|\ddelta_2(\lambda)|$ decays only linearly as $\lambda\to 0$.
\end{remark}

\section*{Appendix: Known results}
Results similar to Theorem \ref{thm:known} are well-known for directed polymers in discrete time, see \cite{comets_st_flour}. If we assume that there are no hard obstacles, i.e. $\rho([-1,-1+\eps])=0$ for some $\eps>0$, all claims can be obtained using the same arguments as in discrete time.

\begin{proof}[Proof of Theorem \ref{thm:known}] The existence of $\p(\kappa\mathbf1)$ as an $L^1$-limit has been shown in \cite[Theorem 1.2]{shiga} and \cite[Theorem 3.1]{shiga2}, while the existence of the almost sure limit is shown in \cite[Theorem 1.1]{shiga3} (under the assumption $\rho(\{-1\})=0$). The existence of the almost sure limit in the hard obstacle case, $\sigma^2=0$ and $\rho=\delta_{\{-1\}}$, is presumably well-known, as it is used in \cite{mountford}. A proof for the existence of $\p(\kappa\mathbf 1)$ and $\p(\kappa\mathbf1,x)$ as almost sure limits in the hard obstacle case, as well as the continuity of $\kappa\mapsto \p(\kappa\mathbf 1)$, can be found in \cite[Propositions 5.5 and 5.6]{thesis}. Those arguments also apply to general environments and with $\kappa\mathbf1$ replaced by a general $\kkappa\in (0,\infty)^{2d}$. 

That $W_t^\kkappa$ is a martingale follows by a standard argument (cf.~\cite[Section 3.1]{comets_st_flour}) once we show $\E[Z_{t}^\kkappa]=e^{\a t}$. To this end, note that since $(L_x)_{x\in\Z^d}$ in~\eqref{eq:potential} is an independent family of L\'evy processes, for any fixed c\'adl\'ag path on $\Z^d$ that jumps at times $0=t_0<t_1<t_2<\cdots< t_n<t$, the Hamiltonian
\begin{align*}
H_t(\omega,x)=\sum_{k=0}^n(L_{x(t_{k-1})}(t_k)-L_{x(t_{k-1})}(t_{k-1}))+(L_{x(t_n)}(t)-L_{x(t_n)}(t_n))
\end{align*}
has the same law as $L_0(t)$. This together with Fubini's theorem and the fact that $\a$ is the Laplace exponent for $L_0(t)$ imply 
\begin{align*}
\E[Z_{\omega,t}^\kkappa]&=E^\kappa\big[\E\big[e^{H_t(\omega,X)}\big]\big]\\ 
&=\E\left[\exp\{L_0(t)\}\right]\\
&=e^{\a t}.
\end{align*}
as desired. The zero-one law from part (ii) has been shown in \cite[Theorem 1.1]{shiga} in the case $\sigma^2=0$, $\rho=\delta_{\{-1\}}$, and the same argument also applies to general environments.

The existence of $\overline\kappa_{\rm cr}$ is shown in \cite[Theorem 1.3]{shiga} for $\sigma^2=0$, $\rho=\delta_{\{-1\}}$, while for general environments it follows from the fact that $\kappa\mapsto \p(\kappa\mathbf1)$ is increasing, which was shown in \cite[(4.15)]{self_comp}. To show the existence of $\kappa_{\rm cr}$, one first follows the arguments from \cite[Proposition 3.1]{yoshida_diffusive} to show that $W^{\kappa\mathbf1}_\infty>0$ is equivalent to $L^1$-convergence of $(W^{\kappa\mathbf1}_t)_{t\geq 0}$, and then applies the same argument as in \cite[(4.8)]{self_comp}. The relation $\overline\kappa_{\rm cr}\leq\kappa_{\rm cr}\leq\kappa_{\rm cr}^{L^2}$ is obvious. 
 
Finally, in dimension $d\geq 3$, the same argument as in the discrete-time case shows $\kappa_{\rm cr}^{L^2}(d)<\infty$, see \cite[Theorem 3.3]{comets_st_flour}. The second statement in part (iv) has been shown for the discrete-time model in \cite{birkner}, and the generalization to continuous time follows along the same lines, using \cite[Theorem 7]{birkner_thesis}. 
\end{proof}
 
The LDP for a discrete-time polymer model is established in~\cite{carmona_hu}, but to the best of our knowledge, it is not available for continuous time random walk models in the literature. 

\begin{proof}[Proof of Theorem \ref{thm:existence}]
We start by considering integer times, i.e., the sequence $(\mu_{\omega,t}^\kappa(X_t/t\in\cdot))_{t\in\N}$. Here the existence of an LDP with some rate function $J'$ follows from the argument of \cite[Theorem 1.1]{carmona_hu}. A concentration inequality corresponding to \cite[Proposition 2.3]{carmona_hu} has been proven for $\sigma=0$, $\rho=\delta_{\{-1\}}$ in \cite[Proposition 5.3]{thesis}. A similar proof applies to the general case, and can also be used to obtain the display following (3.2) in \cite{carmona_hu}. To identify $J'$ with $J^\kappa$ one can follow the arguments from \cite[Theorem 9.1]{comets_st_flour}. The convexity of $x\mapsto J^\kappa(x)$ can be shown in the same way as in \cite[Proposition 5.6]{thesis}. Finally, to show that $J^\kappa$ is good, it is enough to observe $J^\kappa(x)\geq I^\kappa(x)-(\a\vee 0)-\p(\kappa)\to\infty$ for $x\to\infty$.

Next, we claim that the LDP also holds for general $t\in\R_+$. Here one has to be somewhat careful, because the point-to-point partition function does not necessarily have an almost sure limit. That is, if $\rho(\{-1\})>0$ then it is proved in \cite[Proposition 5.6]{thesis}(i) that almost surely,
\begin{align}
\label{eq:non_existence}
-\infty=\liminf_{t\to\infty}\frac 1t\log Z_{\omega,t,[tx]}^{\kappa\mathbf1}<\lim_{t\to\infty,t\in\N}\frac 1t\log Z_{\omega,t,[tx]}^{\kappa\mathbf1}=\p(\kappa,x).
\end{align}
This is why the restriction to $t\in\N$ appears in \eqref{eq:p2p}. It in particular follows that 
\begin{align*}
 \liminf_{t\to\infty}\frac 1t\log \mu_{\omega,t}(X_t=[tx])=-\infty.
\end{align*}
But of course this does not contradict the LDP since we need a lower bound only for open sets. On the technical level, the divergence in~\eqref{eq:non_existence} is caused by the situation that $(t,[tx])$ is close behind a hard obstacle. Such a situation happens for a single point but should not happen for many points simultaneously. We show below that two points are enough to avoid this singularity. 

Now we start with the large deviation lower bound, 
\begin{align}
\label{eq:LDP_lower}
\liminf_{t\to\infty}\frac 1t\log \mu_{\omega,t}(X_t/t\in G)\geq -\inf_{x\in G}J^\kappa(x)\quad\text{ for all }G\subseteq\R^d\text{ open.}
\end{align}
Let $x\in G$ and let $t$ be large enough that both $\floor {tx}\in\floor{t}G$ and $\floor tx+e_1\in \floor tG$. Then
\begin{align*}
\mu_{t,\omega}(X_t/t\in G)&\geq (Z_{\omega,t}^\kappa)^{-1}E^\kappa\left[e^{H_t(\omega,X)}\1_{\{X_{\floor t}=\floor{tx},X_s\in \{\floor{tx},\floor{tx}+e_1\}\text{ for all }s\in[\floor t,t]\}}\right]\\
&=:\frac{Z_{\omega,\floor t,\floor{tx}}^\kappa}{Z_{\omega,t}^\kappa}A(\omega,t,\floor{tx}).
\end{align*}
By \eqref{eq:p2p}, almost surely,
\begin{align}\label{eq:A}
\liminf_{t\to\infty}\frac 1t\log \mu_{t,\omega}(X_t/t\in G)\geq -J^\kappa(x)+\liminf_{t\to\infty} \frac 1t\log A(\omega,t,\floor{tx}).
\end{align} 
Note that the law of $A(t,\omega,x)$ does not depend on $x$ or on the integer part of $t$. In the case $\sigma^2=0$ and $\rho=\delta_{\{-1\}}$, it is shown in~\cite[Lemma 6.4~(iii)]{thesis} that there exists $\delta>0$ such that
\begin{align*}
\sup_{t\in[0,1]} \E\left[ A(t,\omega,0)^{-\delta}\right]<\infty,
\end{align*}
and the general case can be proved in the same way. The Borel-Cantelli lemma then shows that the second term in \eqref{eq:A} equals zero almost surely, and since $x\in G$ is arbitrary we obtain \eqref{eq:LDP_lower}. For the large deviation upper bound, let $\emptyset\neq F\subseteq\R^d$ be closed and let $\overline{F_\eps}$ denote the closure of the $\eps$-neighborhood of $F$. By the LDP for integer times,
\begin{align}
\limsup_{t\to\infty}\frac 1t\log \mu_{\omega,t}(X_t\in tF)&=\limsup_{t\to\infty}\frac 1t\log \mu_{\omega,t}(X_t\in tF,X_{\floor t}\in\floor t\overline{F_\eps})\notag\\
&\leq \limsup_{t\to\infty}\frac 1t\log \mu_{\omega,t}(X_{\floor t}\in\floor t\overline{F_\eps})\notag\\
&\leq -\inf_{x\in \overline{F_\eps}}J^\kappa(x)+\limsup_{t\to\infty}\frac 1t\log \widetilde A(\omega,t),\label{eq:AA}
\end{align}
where 
\begin{align*}
\widetilde A(\omega,t):=\sum_{x\in\floor t\overline{F_\eps}}\frac{\mu_{\omega,\floor t}(X_{\floor t}=x)}{\mu_{\omega,\floor t}(X_{\floor t}\in \floor t\overline{F_\eps})}Z_{\theta^{\floor t,x}\omega,t-\floor t}^\kappa
\end{align*}
and $\theta^{t,x}\omega$ denotes the space-time shift of the environment. It is easy to see that $\log \widetilde A(t,\omega)$ has finite exponential moments, uniformly in $t$, so that the second term in \eqref{eq:AA} is zero almost surely by the Borel-Cantelli lemma. Since $J^\kappa$ is continuous, the first term converges to $-\inf_{x\in F}J^\kappa(x)$ for $\eps\downarrow 0$ as desired.
\end{proof} 

\section*{Acknowledgement}
RF is supported by ISHIZUE 2019 of Kyoto University Research Development Program. SJ is supported by a JSPS Postdoctoral Fellowship for Research in Japan, Grant-in-Aid for JSPS Fellows 19F19814.

\end{document}